\documentclass[12pt]{article}
\usepackage{amsmath, amssymb, amsfonts, amsthm}

\addtolength{\oddsidemargin}{-.875in}
\addtolength{\evensidemargin}{-.875in}
\addtolength{\textwidth}{1.75in}
\addtolength{\topmargin}{-.875in}
\addtolength{\textheight}{1.75in}

\newcommand{\be}{\begin{eqnarray*}}
\newcommand{\ea}{\end{eqnarray*}}
\newcommand{\ket}[1]{| #1 \rangle}
\newcommand{\bra}[1]{\langle #1 |}
\newcommand{\RR}{\mathbb{R}}
\newcommand{\CC}{\mathbb{C}}

\newcommand{\NN}{\mathbb{N}}
\newtheorem{prop}{Proposition}
\newtheorem{axiom}{Definition}

\newtheorem{lem}{Lemma}
\newtheorem{thm}{Theorem}
\newcommand{\fig}{\begin{tikzpicture} \matrix [matrix of nodes, nodes={rectangle,draw} ,ampersand replacement=\& ]{
\, \& \, \& \, \\
\, \& \, \\
\, \\
\, \\
\, \\
};
\end{tikzpicture}}
\newcommand{\fg}{ 
\begin{tikzpicture} \matrix [matrix of nodes, nodes={rectangle,draw} ,ampersand replacement=\& ]{
5 \& 3 \& 2 \& 1 \\
4 \& 2 \& 1 \\
2 \& 1 \& 1 \\
1 \& 1  \\
};
\end{tikzpicture}}
\usepackage{tikz}
\usetikzlibrary{matrix}
\usetikzlibrary{positioning}
\begin{document}
\begin{center}
{\bf McMahon's Formula via Free Fermions}
\end{center}
\begin{abstract}
We give an elementary derivation of the vertex-operator derivation McMahon formula, counting all plane partitions of all size into a single generating function.  We fill in some details appearing in \cite{ORV} based on free fermions by defining an ``interlacing operator".  
\end{abstract}
\begin{paragraph}{} In this note, we prove McMahon's formula counting the number of plane partitions of given size.  Such a result is folklore and more general examples in be found in \cite{PS}.  \end{paragraph}
\begin{thm}The generating function for number of 3D partitions is
\[ \sum_{\text{3D partitions}} q^{\text{\# boxes}} =
 \prod_{n = 1}^\infty \frac{1}{(1 - q^n)^n} =  1 + q + 3q^2 + 6q^3 + \dots \]
 \end{thm}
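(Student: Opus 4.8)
The plan is to follow the free-fermion/vertex-operator strategy of \cite{ORV}, reducing the three-dimensional count to a one-dimensional transfer-matrix computation on ordinary partitions. First I would slice a plane partition $\pi=(\pi_{ij})$ along its diagonals: letting $\lambda(t)$ be the ordinary partition read off from the entries with $j-i=t$, a plane partition is the same data as a bi-infinite sequence of partitions
\[ \emptyset = \lambda(-\infty) \prec \cdots \prec \lambda(-1) \prec \lambda(0) \succ \lambda(1) \succ \cdots \succ \lambda(+\infty) = \emptyset, \]
where $\mu \prec \nu$ denotes the interlacing relation $\nu_1 \ge \mu_1 \ge \nu_2 \ge \mu_2 \ge \cdots$, and the box count is $|\pi| = \sum_t |\lambda(t)|$. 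Thus $Z := \sum_\pi q^{|\pi|}$ becomes a sum over interlacing sequences weighted by $q^{\sum_t |\lambda(t)|}$.

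Next I would introduce the interlacing operators $\Gamma_\pm(z)$ on the space spanned by all partitions, defined by $\Gamma_-(z)\ket{\lambda} = \sum_{\nu \succ \lambda} z^{|\nu|-|\lambda|}\ket{\nu}$ and $\Gamma_+(z)\ket{\lambda} = \sum_{\mu \prec \lambda} z^{|\lambda|-|\mu|}\ket{\mu}$, so that one application realizes a single interlacing step and records the change in size. A summation-by-parts bookkeeping of the exponents shows that taking the spectral parameters to be half-integer powers of $q$ telescopes the weight $q^{\sum_t|\lambda(t)|}$ correctly, giving
\[ Z = \bra{\emptyset}\Bigl(\prod_{k \ge 0}\Gamma_+(q^{k+1/2})\Bigr)\Bigl(\prod_{k \ge 0}\Gamma_-(q^{k+1/2})\Bigr)\ket{\emptyset}, \]
where I have used $\Gamma_+(z)\ket{\emptyset} = \ket{\emptyset}$ and $\bra{\emptyset}\Gamma_-(z) = \bra{\emptyset}$. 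Operators of the same type commute, so the order within each product is immaterial.

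The main step — and the part I expect to be the genuine obstacle — is the commutation relation
\[ \Gamma_+(x)\Gamma_-(y) = \frac{1}{1-xy}\,\Gamma_-(y)\Gamma_+(x). \]
In the elementary ``interlacing operator'' spirit of this note, I would prove it directly as an identity of matrix coefficients, namely
\[ \sum_{\nu \succ \lambda,\ \nu \succ \mu} x^{|\nu|-|\mu|}y^{|\nu|-|\lambda|} = \frac{1}{1-xy}\sum_{\kappa \prec \lambda,\ \kappa \prec \mu} x^{|\lambda|-|\kappa|}y^{|\mu|-|\kappa|} \]
for every pair $\lambda,\mu$. This is a purely combinatorial statement about partitions that simultaneously interlace two fixed partitions; I would establish it by a weight-preserving bijection, or equivalently by an explicit geometric-series summation over the single unbounded degree of freedom that the double interlacing leaves free, with the factor $1/(1-xy)$ arising precisely from that one free parameter. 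Sidestepping the bosonic exponential formula for $\Gamma_\pm$ in favor of this bare-hands argument is exactly the ``detail'' the paper advertises filling in.

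Finally I would commute every $\Gamma_+$ rightward past every $\Gamma_-$ in the formula for $Z$. Each of the swaps contributes a factor $(1 - q^{(i+1/2)+(j+1/2)})^{-1} = (1 - q^{i+j+1})^{-1}$, while the fully reordered vacuum expectation collapses to $\bra{\emptyset}\cdots\ket{\emptyset} = 1$ by the vacuum properties above. Hence
\[ Z = \prod_{i \ge 0}\prod_{j \ge 0}\frac{1}{1 - q^{i+j+1}}. \]
Collecting factors according to the value $n = i+j+1 \ge 1$, which is realized by exactly $n$ pairs $(i,j)$, gives $\prod_{n \ge 1}(1-q^n)^{-n}$, and expanding the first terms recovers $1 + q + 3q^2 + 6q^3 + \cdots$. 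The only hard input is the commutation relation; everything else is bookkeeping of exponents and a reindexing of the product.
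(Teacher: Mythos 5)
Your proposal is correct and follows essentially the same route as the paper: diagonal slicing into interlacing sequences, interlacing operators $\Gamma_\pm$ with the vacuum expectation value as transfer matrix, the commutation relation $\Gamma_+(x)\Gamma_-(y) = \tfrac{1}{1-xy}\Gamma_-(y)\Gamma_+(x)$ proved combinatorially via matrix coefficients with the single unbounded degree of freedom supplying the geometric series, and the final reindexing $\prod_{i,j\ge0}(1-q^{i+j+1})^{-1} = \prod_{n\ge1}(1-q^n)^{-n}$. The only differences are cosmetic conventions --- you absorb the $q^{L_0}$ conjugation directly into the definition of $\Gamma_\pm(z)$ and use all-positive spectral parameters $q^{k+1/2}$, where the paper first writes the product with explicit $q^{L_0}$ insertions and then commutes them out, arriving at $\Gamma_-(q^{-m-1/2})$ in its sign convention --- and these are equivalent bookkeeping choices.
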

\begin{paragraph}{}The sequence of coefficients $1,1,3,6,\dots$ is A000219 in Sloane's Encyclopedia of integer sequences, \cite{S}.  McMahon's identity
was proven in the 1910's and there are many
 combinatorial derivations.   In this note, we will follow the vertex operator proof given by Okounkov, Reshetikhin and Vafa in 2003, \cite{ORV}.  At the heart of the proof is the combinatorial notion of interlacing.
\end{paragraph}

\begin{axiom}We say $\mu \succ \nu$ are {\it interlacing} 2D partitions if $\mu_1 \geq \nu_1 \geq \mu_2 \geq \nu_2 \geq \dots$  \end{axiom}
\begin{prop} Every 3D partition has a decomposition into a finite sequence of interlacing young diagrams.\end{prop}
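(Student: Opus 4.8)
The plan is to exhibit the decomposition explicitly by taking \emph{diagonal slices} of the plane partition, as in \cite{ORV}. I would regard a 3D partition as a finitely supported array of non-negative integers $\pi_{i,j}$ (indices $i,j \geq 1$) that is weakly decreasing along every row and every column, i.e. $\pi_{i,j} \geq \pi_{i+1,j}$ and $\pi_{i,j} \geq \pi_{i,j+1}$. For each integer $t$ I define the slice $\lambda(t)$ by reading the entries of $\pi$ along the diagonal $\{(i,j) : i-j = t\}$ outward from the boundary of the quadrant: concretely, for $t \geq 0$ set $\lambda(t)_k = \pi_{t+k,\,k}$, and for $t < 0$ set $\lambda(t)_k = \pi_{k,\,k-t}$, for $k \geq 1$. (The two formulas agree at $t=0$.) As a first check, each $\lambda(t)$ is itself a 2D partition: along a fixed diagonal both coordinate indices increase together, so monotonicity of $\pi$ in each variable gives $\lambda(t)_k \geq \lambda(t)_{k+1}$, and $\lambda(t)$ is weakly decreasing.

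The crux is verifying that consecutive slices interlace. For $t \geq 0$ I would compare $\lambda(t)$ and $\lambda(t+1)$ entrywise. Column monotonicity gives $\pi_{t+k,\,k} \geq \pi_{t+1+k,\,k}$, that is $\lambda(t)_k \geq \lambda(t+1)_k$; row monotonicity gives $\pi_{t+1+k,\,k} \geq \pi_{t+1+k,\,k+1}$, that is $\lambda(t+1)_k \geq \lambda(t)_{k+1}$. Chaining these yields $\lambda(t)_1 \geq \lambda(t+1)_1 \geq \lambda(t)_2 \geq \dots$, which is exactly $\lambda(t) \succ \lambda(t+1)$. A symmetric computation on the other side of the main diagonal (now using $\lambda(t)_k = \pi_{k,\,k-t}$) gives $\lambda(t) \prec \lambda(t+1)$ for $t < 0$. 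Hence the slices form a single chain that interlaces upward to the main diagonal and then back down,
\[ \dots \prec \lambda(-2) \prec \lambda(-1) \prec \lambda(0) \succ \lambda(1) \succ \lambda(2) \succ \dots \]

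Finally, finiteness follows from the finite support of $\pi$: only finitely many entries are nonzero, so $\lambda(t) = \emptyset$ for all but finitely many $t$, and deleting the empty tails on both ends leaves a genuinely finite interlacing sequence. Since $\pi_{i,j}$ is recovered as the relevant entry of $\lambda(i-j)$, this sequence of slices determines $\pi$ completely, so it constitutes the desired decomposition rather than merely an associated sequence. I expect the interlacing step to be the main obstacle, mostly as a matter of bookkeeping: one must align the indices of $\lambda(t)$ and $\lambda(t+1)$ correctly and treat the two sides of the main diagonal uniformly. The underlying monotonicity inequalities are immediate once the indexing convention is fixed.
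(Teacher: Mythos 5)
Your proposal is correct and takes essentially the same approach as the paper: the paper's two-line proof (``Rotate $45^\circ$. Slice the plane partition diagonally.'') is exactly your diagonal-slice construction $\lambda(t)_k = \pi_{t+k,k}$ (and its mirror for $t<0$), with the paper's figure standing in for your explicit verification. Your chained inequalities $\lambda(t)_k \geq \lambda(t+1)_k \geq \lambda(t)_{k+1}$ and the finiteness remark simply supply the bookkeeping the paper leaves implicit, so nothing further is needed.
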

\begin{proof}
Rotate $45^\circ$. Slice the plane partition diagonally.
\begin{center}
\begin{tikzpicture}

\matrix [matrix of nodes, nodes={rectangle,draw}  ]{
 |[fill=orange!25]|5& 3 & |[fill=green!25]|2 & 1 \\
4 & |[fill=orange!25]|2 & 1 & \\
|[fill=green!25]|2 & 1 & |[fill=orange!25]|1 & \\
1 & |[fill=green!25]|1 &   & \\
};
\draw[->, xshift=1cm] (0, 0)--(1, 0);
\matrix [matrix of nodes, xshift=3cm]{
1 \\
|[fill=green!25]| 2 & |[fill=green!25]| 1 \\
4 & 1 \\
|[fill=orange!25]| 5 & |[fill=orange!25]| 2 & |[fill=orange!25]| 1 \\
3 & 1 \\
|[fill=green!25]| 2 \\
1 \\
};
\end{tikzpicture}
\end{center}

Starting from the upper left corner,  alternating 
down and to the right, we get a nonincreasing sequence of numbers from in the two diagonal slices.  The result is a sequence of interlacing partitions.  
 \end{proof}

{\bf The Hilbert Space of Young Diagrams }
\begin{paragraph}{}We have quantized the set of Young diagrams (of all sizes) into a Hilbert space $\mathcal{H} = \mathrm{Span} \{ \ket{\mu}: \text{partitions }\mu \} $.
Our ``basis states" are Young diagrams\footnote{We represent this young diagram as the product of creation and annihilation operators acting on the Hilbert space spanned by young diagrams by adding and removing squares.  This is related to the ``Frobenius coordinates" of partitions.  The resulting ``quantum mechanics" is connected to the representations of the Virasoro algebra in String Theory.} $\ket{\mu}$.   
  ``$q^{L_0}$" is a peculiar name for an operator.  \end{paragraph}
\begin{axiom}{} The {\bf counting operator} $q^{L_0}$ is defined by 
\[ \fbox{ $q^{L_0} \ket{\mu} = q^{\# \text{ boxes}}\ket{\mu} =
q^{|\mu|} \ket{\mu}$ }\]
This is the exponent 
of another counting operator.  The observable $L_0$ counts the number of boxes in each diagram $\mu$, so \[L_0\ket{\mu} = \{ \# \text{ of boxes in }\mu\} \times \ket{\mu}\] 
The operators $L_0, q^{L_0}$ act diagonally on the space of young diagrams.   \end{axiom}
\begin{paragraph}{} The interlacing operators are defined by its action on the basis of young diagrams.
\be
\Gamma_- \ket{\mu} &=& \sum_{  \mu \succ \nu } \ket{\nu} \\
\Gamma_+ \ket{\mu} &=& \sum_{ \mu \prec \nu } \ket{\nu}
\ea 
The $\Gamma_{\pm}$ are ``transfer matrices" giving all the possible interlacings $\nu \succ \mu$.  They are adjoint to one another.   \end{paragraph}
\begin{paragraph}{}Using the $\Gamma_{\pm}$ can count every possible 3D partition in a {\it partition function}:\end{paragraph}
\begin{lem}{} 
\[ Z = \bigg\langle \varnothing \bigg| \prod_{t = 0}^\infty q^{L_0} \Gamma_+ \times q^{L_0} \times q^{L_0} \prod_{-\infty}^{-1} \Gamma_- q^{L_0} \bigg| \varnothing \bigg\rangle \]
\end{lem}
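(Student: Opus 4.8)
The plan is to evaluate the matrix element directly by inserting a complete set of states between every pair of adjacent operators, thereby converting the operator product into an explicit sum over sequences of Young diagrams. Since the basis $\{\ket{\mu}\}$ is orthonormal, writing $\mathbf{1} = \sum_\mu \ket{\mu}\bra{\mu}$ in each gap of the operator word turns $Z$ into a sum, over all assignments $t \mapsto \lambda(t)$ of a partition to each slot, of the product of the intervening matrix elements. The bra $\langle \varnothing |$ and ket $| \varnothing \rangle$ pin the two ends of this sequence to the empty diagram.

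First I would record the elementary matrix elements. By the definition of the interlacing operators, $\langle \nu | \Gamma_- | \mu \rangle$ and $\langle \nu | \Gamma_+ | \mu \rangle$ equal $1$ exactly when the indicated pair interlaces (respectively $\mu \succ \nu$ and $\mu \prec \nu$) and $0$ otherwise, while each counting operator contributes a scalar via $q^{L_0}\ket{\lambda(t)} = q^{|\lambda(t)|}\ket{\lambda(t)}$. Consequently a given sequence $\lambda(\cdot)$ contributes a nonzero term precisely when every adjacent pair interlaces in the direction dictated by the corresponding $\Gamma_\pm$, i.e. when
\[ \varnothing \prec \cdots \prec \lambda(-1) \prec \lambda(0) \succ \lambda(1) \succ \cdots \succ \varnothing, \]
and in that case the term is exactly $q^{\sum_t |\lambda(t)|}$. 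Thus $Z = \sum q^{\sum_t |\lambda(t)|}$, the sum ranging over all finitely supported interlacing sequences with empty tails.

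It then remains to identify this sum with the generating function of Theorem 1, which is exactly the content of Proposition 1: the diagonal slicing is a bijection between 3D partitions and such peaked interlacing sequences, and under it the total box count equals $\sum_t |\lambda(t)|$, the sum of the sizes of the slices, since each cube lies in exactly one diagonal slice. Hence the displayed matrix element equals $\sum_{\text{3D partitions}} q^{\#\text{boxes}}$, which is the partition function $Z$.

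The step I expect to require the most care is the treatment of the two infinite products, both as a question of convergence and of well-definedness. The key observation that tames them is that a $\Gamma$ acting on the empty diagram can only reproduce the empty diagram in the shrinking direction (there is no $\nu$ with $\varnothing \succ \nu$ other than $\nu = \varnothing$), so for any fixed 3D partition only finitely many slots carry a nonempty diagram and the remaining factors act as the identity. Together with the weighting $q^{|\lambda(t)|}$, which renders the geometric sums generated by the growing direction absolutely convergent for $|q| < 1$, this shows the infinite product is a well-defined limit and legitimizes the term-by-term reordering used above. One should also fix once and for all the reading convention for the ordered products, so that the growing and shrinking directions are correctly matched to the two halves of the word; with the orientation of Proposition 1 this last point is routine bookkeeping.
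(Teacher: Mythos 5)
Your proof is correct and takes essentially the same approach as the paper: the paper's four-bullet ``instructions'' proof is precisely the transfer-matrix reading of the operator word that you make explicit by inserting complete sets of states --- a sum over sequences of interlacing partitions pinned to $\varnothing$ at both ends, identified with 3D partitions via the diagonal slicing of Proposition 1, with $q^{L_0}$ supplying the weight $q^{\sum_t |\lambda(t)|} = q^{\#\text{boxes}}$. Your extra care about convergence and the finite support of the sequences only fills in details the paper leaves implicit (it defers such rigor to its final section on the $q$-adic topology).
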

\begin{paragraph}{}This product of vertex operators is a translation of Okounkov's recipe for building 3D partitions out of interlacing Young diagrams.\end{paragraph}
\begin{proof}This infinite product can be interpreted as instructions:
\begin{itemize}
	\item start with the empty partition
	\item add some layers to the middle such that $\mu_{t+1} \succ \mu_t$
	\item add some layers to the middle such that $\mu_{t+1} \prec \mu_t$
	\item finish with the empty partition
\end{itemize}
Every 3D partition can be built this way.
\end{proof}
\begin{paragraph}{}At this point, \cite{ORV} indicates to ``commute the $q^{L_0}$ operators to the outside":\end{paragraph}
\begin{lem}\[  Z = \bigg\langle \varnothing \bigg| \prod_{n > 0} \Gamma_+ ( q^{n - \frac{1}{2}} ) 
\prod_{m \geq 0} \Gamma_- ( q^{-m - \frac{1}{2}} ) \bigg| \varnothing \bigg\rangle \] \end{lem}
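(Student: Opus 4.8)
The plan is to prove this by repeatedly applying a single commutation relation between the counting operator $q^{L_0}$ and the interlacing operators, sliding every copy of $q^{L_0}$ out to the two vacuum states at the ends, where it acts trivially. The entire content of the lemma is the bookkeeping of the powers of $q$ that accumulate during this sliding; this is precisely the step that \cite{ORV} abbreviates as ``commute the $q^{L_0}$ operators to the outside.''

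First I would introduce the size-graded operators $\Gamma_\pm(x)$ whose unweighted specializations $\Gamma_\pm(1)=\Gamma_\pm$ are the interlacing operators already defined, namely
\[ \Gamma_+(x)\ket{\mu} = \sum_{\mu \prec \nu} x^{\,|\nu|-|\mu|}\ket{\nu}, \qquad \Gamma_-(x)\ket{\mu} = \sum_{\mu \succ \nu} x^{\,|\mu|-|\nu|}\ket{\nu}. \]
(These weighted operators are used but not defined in the excerpt, so supplying them is part of filling in the gap.) I would then verify the key relation
\[ q^{L_0}\,\Gamma_\pm(x)\,q^{-L_0} = \Gamma_\pm(q^{\pm 1}x) \]
by evaluating both sides on a basis vector $\ket{\mu}$. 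Each side yields the same sum over interlacing $\nu$, and the only thing to check is that the power of $q$ weighting $\ket{\nu}$ matches; for $\Gamma_+$ this is the identity $q^{|\nu|} = q^{|\mu|}\cdot q^{\,|\nu|-|\mu|}$, and the $\Gamma_-$ case is analogous. Thus conjugating an interlacing operator by $q^{L_0}$ simply rescales its argument by $q^{\pm 1}$.

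Next I would use the triviality of $q^{L_0}$ on the vacuum: since $\ket{\varnothing}$ has no boxes, $q^{L_0}\ket{\varnothing} = \ket{\varnothing}$ and $\bra{\varnothing}q^{L_0} = \bra{\varnothing}$, so any factor of $q^{L_0}$ that reaches either end may be deleted. Starting from the expression of the previous lemma, I would slide each $q^{L_0}$ toward its nearest vacuum, pushing it past the chain of interlacing operators lying between it and the boundary. By the relation above, every interlacing operator accumulates one factor of $q^{\pm 1}$ for each $q^{L_0}$ that is slid past it; proceeding from the center outward, the operator at depth $n$ therefore acquires the argument $q^{\pm n}$, giving (before the central correction) integer powers $q^{1},q^{2},\dots$ on the $\Gamma_+$ side and $q^{-1},q^{-2},\dots$ on the $\Gamma_-$ side. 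The two central $q^{L_0}$ factors, distributed symmetrically to the two halves, contribute an opposite half-shift to each side and convert these integer arguments into the half-integer values $q^{\,n-\frac12}$ for $n>0$ and $q^{\,-m-\frac12}$ for $m\geq 0$, which is exactly the claimed product.

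The step I expect to be the main obstacle is pinning down the $\pm\tfrac12$ shift together with the precise index ranges ($n>0$ versus $m\geq 0$); this is where the asymmetric placement of the middle $q^{L_0}$ factors matters, and it is the only genuinely delicate piece of the bookkeeping. Convergence is not a real difficulty: for any fixed partition all but finitely many $\Gamma_\pm$ act as the identity, so the correlator is a well-defined formal power series in $q$ and the sliding is legitimate term by term. The care is entirely in checking that the symmetric split of the central factors produces the half-integer arguments with the stated ranges and that no stray factor of $q^{L_0}$ survives after all the sliding; once that is confirmed the identity follows.
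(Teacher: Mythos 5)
Your proposal follows essentially the same route as the paper's own proof: you define the weighted operators $\Gamma_\pm(x)$, establish the conjugation relation $q^{L_0}\,\Gamma_\pm(x)\,q^{-L_0} = \Gamma_\pm(q^{\pm 1}x)$, slide every $q^{L_0}$ to the ends where $q^{L_0}\ket{\varnothing} = \ket{\varnothing}$ kills it, and attribute the half-integer arguments to splitting the central $q^{L_0}$ as $q^{L_0/2}\cdot q^{L_0/2}$ --- exactly the paper's argument, with the index-range bookkeeping spelled out more explicitly than the paper bothers to. No gap; your version is if anything more careful than the original.
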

\begin{proof}How does $\Gamma_-$ commute with
$q^{L_0}$?   We're solving $q^{L_0} \Gamma_-  = X q^{L_0}$ for some unknown operator, $X$, but  
\[ X = q^{L_0} \Gamma_- q^{-L_0} \equiv \Gamma_-(q) \] 
so we can interpret $X$ as the ``time-evolved" $\Gamma_-$ operator, $\Gamma_-(q)$, and $\Gamma_- = \Gamma_-(1)$.
\[ \Gamma_-(q)\ket{\mu} = q^{L_0} \Gamma_- q^{-L_0}\ket{\mu} = \sum_{\nu \succ \mu}q^{|\nu |- |\mu| }\ket{\nu} \]
We can also time-evolve the $\Gamma_+$ operator, replacing $\succ$ with $\prec$.
\[ \Gamma_+(q)\ket{\mu} = q^{L_0} \Gamma_+ q^{-L_0}\ket{\mu} = \sum_{\nu \prec \mu}q^{|\nu |- |\mu| }\ket{\nu} \]
Once the $q^{L_0}$ operators are ``commuted out", $q^{L_0}\ket{\varnothing}  = \ket{\varnothing}$.
\[  Z = \bigg\langle \varnothing \bigg| \prod_{n > 0} \Gamma_+ ( q^{n - \frac{1}{2}} ) 
\prod_{m \geq 0} \Gamma_- ( q^{-m - \frac{1}{2}} ) \bigg| \varnothing \bigg\rangle \]
The $\frac{1}{2}$'s come from splitting the middle $q^{L_0}$ operator into $q^{\frac{L_0}{2}} \cdot q^{\frac{L_0}{2}}$.
\end{proof}
\begin{paragraph}{}To finish the calculation, recall the commutation relation in Kac' book: \cite{K}.\end{paragraph}
\begin{prop}
\[ \Gamma_+(q^m) \Gamma_-(q^{-n} ) = \frac{1}{1 - q^{m+n}} \Gamma_-(q^{-n}) \Gamma_+ (q^m) \]
\end{prop}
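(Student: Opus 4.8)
The plan is to verify the operator identity entry-by-entry in the partition basis: since the $\ket{\mu}$ span $\mathcal{H}$ and the $\Gamma_{\pm}$ are mutually adjoint, it suffices to show
\[ \bra{\kappa}\Gamma_+(q^m)\Gamma_-(q^{-n})\ket{\mu} = \frac{1}{1-q^{m+n}}\,\bra{\kappa}\Gamma_-(q^{-n})\Gamma_+(q^m)\ket{\mu} \]
for every pair of partitions $\mu,\kappa$. First I would insert a resolution of the identity $\sum_\lambda \ket{\lambda}\bra{\lambda}$ between the two operators on each side and read off the one-step amplitudes from the time-evolved actions derived above, namely that $\bra{\lambda}\Gamma_-(q^{-n})\ket{\mu}$ is a power of $q$ supported on $\lambda \succ \mu$ and $\bra{\kappa}\Gamma_+(q^m)\ket{\lambda}$ is a power of $q$ supported on $\lambda \succ \kappa$. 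The left-hand side then becomes a sum over all partitions $\lambda$ that interlace both $\mu$ and $\kappa$ from above, while the right-hand side (before the scalar) becomes a sum over all $\lambda'$ that interlace both from below, each term an explicit monomial in $q$ whose exponent is linear in $|\lambda|$ (resp. $|\lambda'|$).

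The combinatorial core is to parametrize these intermediate diagrams row by row. A common upper interlacer must satisfy $\max(\mu_i,\kappa_i) \le \lambda_i \le \min(\mu_{i-1},\kappa_{i-1})$ (with $\mu_0=\kappa_0=+\infty$), and a common lower interlacer must satisfy $\max(\mu_{i+1},\kappa_{i+1}) \le \lambda'_i \le \min(\mu_i,\kappa_i)$. The decisive observation is that every row of $\lambda'$ is pinned inside a finite interval, so the right-hand sum is finite, whereas the top row $\lambda_1$ of $\lambda$ is bounded only below and runs over all integers $\ge \max(\mu_1,\kappa_1)$. Collecting the $q$-weight as a product over rows, the free row $\lambda_1$ contributes a geometric series $\sum_{j\ge 0} q^{(m+n)j} = (1-q^{m+n})^{-1}$, and the reindexing $\lambda_{i+1}\leftrightarrow\lambda'_i$ identifies the remaining (bounded) rows of $\lambda$ with the rows of $\lambda'$ interval-for-interval. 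Factoring the geometric series out front leaves exactly the right-hand matrix element, which is the claim.

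The step I expect to be the genuine obstacle is the weight bookkeeping: after the reindexing one must check that the surviving monomials agree term-by-term, and in particular that enlarging $\lambda_1$ by one box multiplies the weight by $q^{m+n}$ rather than $q^{m-n}$ or $q^{-(m+n)}$ --- this is precisely where the two arguments $q^m$ and $q^{-n}$ must conspire, and it is the only place the relation is sensitive to the sign conventions buried in the time-evolution formulas above. As an independent check, and the route signalled by the reference to Kac, I would realize the operators bosonically as $\Gamma_+(x)=\exp\!\big(\sum_{k\ge1}\tfrac{x^{k}}{k}\alpha_k\big)$ and $\Gamma_-(x)=\exp\!\big(\sum_{k\ge1}\tfrac{x^{-k}}{k}\alpha_{-k}\big)$ with Heisenberg modes $[\alpha_k,\alpha_{-k}]=k$; since the relevant commutator is central, Baker--Campbell--Hausdorff collapses the reordering to the scalar $\exp\!\big(\sum_{k\ge1}\tfrac{q^{(m+n)k}}{k}\big)=(1-q^{m+n})^{-1}$, confirming both the sign and the closed form obtained combinatorially.
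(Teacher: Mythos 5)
Your overall architecture --- comparing matrix elements $\bra{\kappa}\cdot\ket{\mu}$, inserting a complete set of intermediate partitions, parametrizing the common upper and lower interlacers row by row, and extracting $(1-q^{m+n})^{-1}$ as the geometric series over the single unbounded row --- is exactly the paper's interlacing proof, and your explicit interval constraints are in fact sharper than the paper, which only computes the vacuum case and gestures at the ``extra degree of freedom'' in a figure. But the step you defer as ``the genuine obstacle'' is not merely delicate: as you have set it up, it fails. Write $t=q^{m+n}$, $z=q^m$, $w=q^{-n}$, and use one-step amplitudes consistent with your own bosonic realization; then the summand on the left-hand side is $w^{|\mu|}z^{-|\kappa|}\,t^{|\lambda|}$ while on the right-hand side it is $z^{|\mu|}w^{-|\kappa|}\,t^{-|\lambda'|}$. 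The exponent of $t$ runs in \emph{opposite} directions in $|\lambda|$ and $|\lambda'|$, and the scalar prefactors disagree, so the identification $\lambda_{i+1}\leftrightarrow\lambda'_i$ does \emph{not} make the surviving monomials agree term-by-term. Smallest counterexample: $\mu=\kappa=(1)$, where the bounded row $\lambda_2=0$ must be paired with $\lambda'=(1)$ and $\lambda_2=1$ with $\lambda'=\varnothing$. The correct bijection reverses each interval, $\lambda'_i=\min(\mu_i,\kappa_i)+\max(\mu_{i+1},\kappa_{i+1})-\lambda_{i+1}$; equivalently, avoid bijections altogether and compare the row-factored sums via
\[ \sum_{x=a_i}^{b_i} t^{x} \;=\; t^{a_i+b_i}\sum_{x=a_i}^{b_i} t^{-x}, \qquad a_i=\max(\mu_{i+1},\kappa_{i+1}),\quad b_i=\min(\mu_i,\kappa_i), \]
then telescope using $\min(\mu_i,\kappa_i)+\max(\mu_i,\kappa_i)=\mu_i+\kappa_i$ to get $\sum_i(a_i+b_i)=|\mu|+|\kappa|-\max(\mu_1,\kappa_1)$. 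The resulting factor $t^{|\mu|+|\kappa|}$ is precisely what reconciles the mismatched prefactors, while the free row contributes $t^{\max(\mu_1,\kappa_1)}/(1-t)$; with these two lines your argument closes.

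Two smaller points. You should also record that the rows of the intermediate partition vary independently within their intervals (each $\lambda_i$ is constrained only by $\mu$ and $\kappa$, and $\lambda_i\geq\lambda_{i+1}$ is automatic since $\lambda_{i+1}\leq\min(\mu_i,\kappa_i)\leq\max(\mu_i,\kappa_i)\leq\lambda_i$), which is what licenses factoring the sum row by row; and nonemptiness of the intervals is the same condition on both sides, so the two matrix elements vanish simultaneously. Finally, your Baker--Campbell--Hausdorff computation is a good sanity check on the sign and the closed form, but at this paper's level of development it is not an independent proof: it presupposes the identification of the combinatorially defined $\Gamma_\pm$ with the Heisenberg exponentials (equivalently, that the one-step amplitudes are principally specialized skew Schur functions), which is exactly the machinery the paper sets aside when it says ``Let's use interlacing instead.''
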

\begin{paragraph}{}
This is proven from a definition of $\Gamma_{\pm}$ using vertex operators or using infinite dimensional Lie algebras.  Let's use interlacing instead.\end{paragraph}
\begin{proof} Let $a = q^m,b=q^{-n}$.  These products are infinite sums over two young tableau valued degrees of freedom.
\be  \Gamma_+(a) \Gamma_-(b^{-1} ) \ket{\mu} &=& \sum_{\nu \succ \mu  } \sum_{ \nu \succ \mu_1 }
a^{ |\mu_1| - |\nu|  }b^{|\mu| - |\nu| } \ket{\mu_1} \\
\Gamma_-(a) \Gamma_+(b^{-1} ) \ket{\mu} &=& \sum_{\nu \prec \mu  } \sum_{ \nu \prec \mu_1 }
a^{ |\mu_1| - |\nu|  }b^{|\mu| - |\nu| } \ket{\mu_1}
\ea
We can show two operators are proportionate by comparing their matrix elements.
\[ (1 - ab)\bra{\mu'} \Gamma_+(a) \Gamma_-(b^{-1} ) \ket{\mu} = \bra{\mu'} \Gamma_-(b^{-1}) \Gamma_+ (a)\ket{\mu} \]
This reduces the degrees of freedom to just $\mu_1$.

What does this sum look like when $\ket{\mu_0} = \ket{\varnothing}$?
\[\Gamma_+(a) \Gamma_-(b^{-1} ) \ket{\varnothing} = \sum_{\nu \succ \varnothing  , \nu \succ \mu_1 }
a^{ |\mu_1| - |\nu|  }b^{|\varnothing| - |\nu| } \ket{\mu_1} =  \sum_{\nu \geq \mu_1 \geq 0 }
a^{|\mu_1| - |\nu|  }b^{ - |\nu| } \ket{\mu_1}  = \sum_{\mu_1 \geq 0}  \frac{ b^{ -|\mu_1|}\ket{\mu_1}}{1 - 1/ab}\]
$\mu_1,\mu_2$ can have only one part, so they are numbers.
In the other sum $\varnothing \succ \mu_1 \prec \nu $, so $\mu_1 = \varnothing$ also.
\[\Gamma_-(a) \Gamma_+(b^{-1} ) \ket{\varnothing} =   \sum_{\mu_1 \prec \varnothing  , \mu_1 \succ \nu }
a^{ |\nu| - |\mu_1|  }b^{|\nu| - |\mu_1| } \ket{\nu}  = \sum_{\nu \geq 0} a^{|\nu|} \ket{\nu} \]
Even though we've only proven it for the vacuum state $\ket{\varnothing}$, we can prove it for any state $\ket{\mu}$.
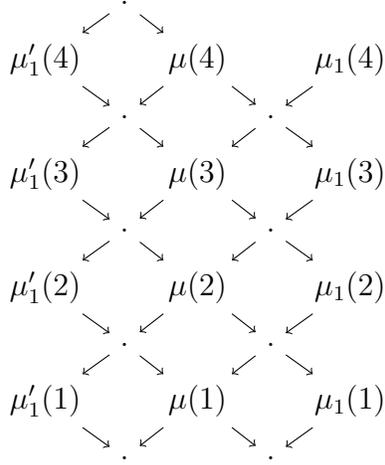
\begin{figure}[h]
\centering
\begin{tikzpicture} 
\matrix (ineqs)[matrix of nodes, row sep = 0.25cm, column sep = 0.25cm, xshift=3cm] { 
 & .&   & & \\
$\mu'_1(4)$&  & $\mu(4)$ &  & $\mu_1(4)$\\
 & .&   & .& \\
$\mu'_1(3)$&  & $\mu(3)$ &  & $\mu_1(3)$\\
 & .&   & .& \\
$\mu'_1(2)$&  & $\mu(2)$ &  & $\mu_1(2)$\\
 & .&   & .& \\ 
$\mu'_1(1)$&  & $\mu(1)$ &  & $\mu_1(1)$\\
 & .&   & .& \\ };

\draw[->] (ineqs-1-2) -- (ineqs-2-1);
\draw[->] (ineqs-3-2) -- (ineqs-4-1);
\draw[->] (ineqs-5-2) -- (ineqs-6-1);
\draw[->] (ineqs-7-2) -- (ineqs-8-1);
\draw[->] (ineqs-2-1) -- (ineqs-3-2);
\draw[->] (ineqs-4-1) -- (ineqs-5-2);
\draw[->] (ineqs-6-1) -- (ineqs-7-2);
\draw[->] (ineqs-8-1) -- (ineqs-9-2);

\draw[->] (ineqs-1-2) -- (ineqs-2-3);
\draw[->] (ineqs-3-2) -- (ineqs-4-3);
\draw[->] (ineqs-5-2) -- (ineqs-6-3);
\draw[->] (ineqs-7-2) -- (ineqs-8-3);
\draw[->] (ineqs-2-3) -- (ineqs-3-2);
\draw[->] (ineqs-4-3) -- (ineqs-5-2);
\draw[->] (ineqs-6-3) -- (ineqs-7-2);
\draw[->] (ineqs-8-3) -- (ineqs-9-2);

\draw[->] (ineqs-3-4) -- (ineqs-4-3);
\draw[->] (ineqs-5-4) -- (ineqs-6-3);
\draw[->] (ineqs-7-4) -- (ineqs-8-3);
\draw[->] (ineqs-2-3) -- (ineqs-3-4);
\draw[->] (ineqs-4-3) -- (ineqs-5-4);
\draw[->] (ineqs-6-3) -- (ineqs-7-4);
\draw[->] (ineqs-8-3) -- (ineqs-9-4);

\draw[->] (ineqs-3-4) -- (ineqs-4-5);
\draw[->] (ineqs-5-4) -- (ineqs-6-5);
\draw[->] (ineqs-7-4) -- (ineqs-8-5);
\draw[->] (ineqs-2-5) -- (ineqs-3-4);
\draw[->] (ineqs-4-5) -- (ineqs-5-4);
\draw[->] (ineqs-6-5) -- (ineqs-7-4);
\draw[->] (ineqs-8-5) -- (ineqs-9-4);

\end{tikzpicture}
\caption{ $\nu'(5)$ can be any number $\geq \mu(4)$.}
\end{figure}

We are trying to compare the two kinds of betweenness $\mu_1 \succ \nu \prec \mu$ 
and $\mu_1' \prec \nu' \succ \mu$.  Observe  \[ \{ \mu_1 : \exists \nu \text{ such that }\mu_1 \succ \nu \prec \mu\} = \{ \mu_1' : \exists \nu' \text{ such that }\mu_1' \prec \nu' \succ \mu\} \]
However, once we fix $\mu_2$, $\nu'$ has an extra degree of freedom -  $\nu'(\mathrm{len}(\mu)+1) \geq \nu(len(\mu))= \mu_{max}$ - which accounts for the $1 - ab$ factor.
\end{proof}
\begin{paragraph}{} Commuting each $\Gamma_+$ across infinitely many $\Gamma_-$'s, a factor of $\frac{1}{1 - q^n}$ appears $n$ times for each $n$.  After that, $\Gamma_-\ket{\varnothing} = \ket{\varnothing}$ and it's adjoint $\bra{\varnothing}\Gamma_+ = \ket{\varnothing}$, so that $Z = \prod_{n > 0} (1 - q^n)^{-n}$. \end{paragraph}
\newpage{\bf Convergence of the Infinite Product}
\begin{paragraph}{} This vertex operator proof of McMahon's formula is not rigorous since it depends on infinitely many commutations and is a formal q-series identity.  Maybe we can specify how these series converge.  Our approach is to replace $\infty$ with $L$ where the products are finite and let $L$ tend to infinity.
\begin{prop}\[  Z = \bigg\langle \varnothing \bigg| \prod_{n > 0}^L \Gamma_+ ( q^{n - \frac{1}{2}} ) 
\prod_{m \geq 0}^L \Gamma_- ( q^{-m - \frac{1}{2}} ) \bigg| \varnothing \bigg\rangle = \prod_{0 \leq m, n\leq L} 
\frac{1}{1 - q^{m+n}} \] \end{prop}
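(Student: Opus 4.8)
The plan is to prove this finite identity by brute-force normal ordering: apply the commutation relation $\Gamma_+(q^m)\Gamma_-(q^{-n}) = \frac{1}{1 - q^{m+n}}\Gamma_-(q^{-n})\Gamma_+(q^m)$ repeatedly to drag every $\Gamma_+$ factor to the right of every $\Gamma_-$ factor, collecting one scalar per interchange, and then annihilate what remains against the vacuum. Because both products are now finite, this is a finite sequence of interchanges with no convergence subtleties; the entire content is bookkeeping of exponents.

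First I would fix the matching of indices. The left block is $\prod_{n=1}^{L}\Gamma_+(q^{n-1/2})$ and the right block is $\prod_{m=0}^{L}\Gamma_-(q^{-m-1/2})$. Writing $\Gamma_+(q^{n-1/2})$ as $\Gamma_+(q^{a})$ with $a = n-\tfrac12$ and $\Gamma_-(q^{-m-1/2})$ as $\Gamma_-(q^{-b})$ with $b = m+\tfrac12$, the commutation relation shows that moving the $n$-th plus-operator past the $m$-th minus-operator (left to right, exactly the pattern of the relation) contributes
\[ \frac{1}{1 - q^{a+b}} = \frac{1}{1 - q^{(n-1/2)+(m+1/2)}} = \frac{1}{1 - q^{m+n}}, \]
so the half-integer shifts cancel and leave an integer exponent $m+n \ge 1$, the lower bound coming from $n \ge 1$. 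This is the step where I would be most careful, since an off-by-one in either range changes the multiplicity of each $1-q^{k}$.

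Next I would carry out the reordering by iterating over the $L$ plus-factors one at a time, each sweeping across all $L+1$ minus-factors. Pulling a single $\Gamma_+(q^{n-1/2})$ through the whole block $\prod_{m=0}^{L}\Gamma_-(q^{-m-1/2})$ produces the scalar $\prod_{m=0}^{L}(1-q^{m+n})^{-1}$ and deposits that $\Gamma_+$ on the far right; repeating for $n=1,\dots,L$ accumulates $\prod_{n=1}^{L}\prod_{m=0}^{L}(1-q^{m+n})^{-1}$ and reorders the string into $\bra{\varnothing}\bigl(\prod_{m=0}^{L}\Gamma_-(q^{-m-1/2})\bigr)\bigl(\prod_{n=1}^{L}\Gamma_+(q^{n-1/2})\bigr)\ket{\varnothing}$.

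Finally I would evaluate the leftover matrix element. After reordering the $\Gamma_+$ block is adjacent to the ket and the $\Gamma_-$ block is adjacent to the bra, so using the boundary relations recorded above — that $\Gamma_+$ fixes the ket vacuum and $\Gamma_-$ fixes the bra vacuum — the two blocks collapse one operator at a time, leaving $\langle\varnothing\ket{\varnothing}=1$. The answer is therefore the accumulated scalar $\prod_{n=1}^{L}\prod_{m=0}^{L}(1-q^{m+n})^{-1}$, which I would flag is what the stated right-hand side $\prod_{0\le m,n\le L}(1-q^{m+n})^{-1}$ must mean: the $\Gamma_+$ index runs from $1$, not $0$, so that $m+n \ge 1$ and no divergent $1/(1-q^{0})$ ever appears. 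The main obstacle is thus not analytic but notational — pinning down the exact ranges so the half-integers cancel and the degenerate $m=n=0$ term is excluded — after which $L\to\infty$ recovers McMahon's product, since each $k=m+n$ is hit with multiplicity $k$, giving $\prod_{k\ge 1}(1-q^{k})^{-k}$.
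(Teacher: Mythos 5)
Your proposal is correct and takes essentially the same approach as the paper, which also proves this by repeatedly applying the commutation relation $\Gamma_+(q^m)\Gamma_-(q^{-n}) = \frac{1}{1-q^{m+n}}\Gamma_-(q^{-n})\Gamma_+(q^m)$ to normal-order the string and then collapsing the remaining operators against the vacuum. Your extra bookkeeping --- observing that $n \geq 1$ excludes a divergent $1/(1-q^0)$ factor from the stated product, and that (in the convention of the proof of Lemma 2) $\Gamma_+$ fixes $\ket{\varnothing}$ while $\Gamma_-$ fixes $\bra{\varnothing}$ --- merely makes explicit details the paper leaves implicit.
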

 Mod $q^L$ all of the factors are asymptotically, $1$. So we have a well-defined product.
 \[ \prod_{0 \leq m, n\leq L} 
\frac{1}{1 - q^{m+n}}  = \prod_{n \geq 0} 
\frac{1}{(1 - q^{n})^n} ( \mod q^L )  \]
This is convergence in a ``q-adic" topology on the ring of power series, $\CC[[q]]$.  The ideals $(q^k)$ correspond to the germ at $q =0$.  
  \end{paragraph}

\begin{paragraph}{End} These vertex operators arise in the quantization of the free scalar field on $\Sigma = \RR \times S^1$, see Chapter 11 of \cite{H}.  In \cite{PS}, these techniques for counting plane partitions are generalized other lattices.  In \cite{ZJ}, it's mentioned the matrix elements of $\Gamma_\pm$ are skew-Schur polynomials from the invariant theory of $S_n$, so there may be a relation to Gelfand-Tsetlin patterns.  There are many $q$-series formulas whose proofs may be simplified using these interlacing operators.\end{paragraph}
\newpage \section{Appendix: Partitions and 3D Partitions}
\begin{axiom}A {\bf partition} is a decomposition
of natural numbers into ``parts" , $n = n_1 + n_2 + \dots n_k$, written in 
decreasing order, $n_1 \geq n_2 \geq \dots \geq n_k$.\end{axiom}
\begin{paragraph}{}  
The partition $8 = 5 + 2 + 1$ can be drawn as a Young diagram.:
\[  \fig \] 
Plane partitions are the 3D analog of integer partitions. \end{paragraph}
\begin{axiom} A {\bf 3D partition} is an Young diagram with integer 
entries weakly decreasing 
in both directions.
\[ \fg \]
Each point represents the height of the plane partition above each point. \end{axiom}
\begin{paragraph}{} 
This array of numbers is weakly decreasing going to the right and down.
So a plane partition is a matrix $\mu(m,n)$ with $m, n \in \NN$ and
$\mu(x+1, y) \geq \mu(x,y)$, $\mu(x,y+1) \geq \mu(x,y)$ 
and
$\mu(x,y) = 0$ for $x,y > N$ for some $N$.  Outside of a sufficiently large square, all entries are zero.  \end{paragraph}
   
\begin{paragraph}{Acknowledgement} Most of this was done while the author was a graduate student at UC Santa Barbara and a visitor at IPMU in Kashiwa, Japan during Fall 2010.  The author acknowledges David Morrison for his help. \end{paragraph}

\end{document}